\theoremstyle{plain}
\newtheorem{theorem}{Theorem}
\numberwithin{equation}{section}
\newcommand{\ra}{\rightarrow}
\begin{document}

\title {Partial Inner Products on Antiduals}

\date{}

\author[P.L. Robinson]{P.L. Robinson}

\address{Department of Mathematics \\ University of Florida \\ Gainesville FL 32611  USA }

\email[]{paulr@ufl.edu}

\subjclass{} \keywords{}

\begin{abstract}

We discuss extensions of an inner product from a vector space to its full antidual. None of these extensions is weakly continuous, but partial extensions recapture some familiar structure including the Hilbert space completion and the antiduality pairing. 

\end{abstract}

\maketitle

\medbreak

\medbreak 

Let $V$ be an infinite-dimensional complex vector space on which $\langle \bullet | \bullet \rangle$ is an inner product. We adopt the convention according to which $\langle x | y \rangle$ is antilinear in $x$ and linear in $y$; we make no assumption regarding completeness of the inner product. Let $V'$ be the full antidual of $V$: thus, $V'$ comprises precisely all antilinear maps $V \ra \mathbb{C}$ whether bounded or otherwise. The inner product $\langle \bullet | \bullet \rangle$ engenders a canonical linear embedding of $V$ in $V'$: explicitly, for each $v \in V$ we define $v' \in V'$ by the rule that if $z \in V$ is arbitrary then 
$$v' (z) = \langle z | v \rangle.$$

\medbreak 

Our aim is to investigate inner products on $V'$ that are compatible with the given inner product $\langle \bullet | \bullet \rangle$ on $V$. At the very least, we should insist that compatibility requires the embedding $V \ra V'$ to be isometric. Were this our only compatibility requirement, a suitable inner product on $V'$ could of course be defined by transporting the given inner product to $\widehat{V} = \{ v' : v \in V \} \subset V'$ and choosing a (purely algebraic) decomposition $V' = \widehat{V} \oplus W$, providing $W$ with an inner product and making the decomposition orthogonal. 

\medbreak 

We shall demand more of compatibility. The full antidual $V'$ naturally carries the (weak) topology of pointwise convergence, according to which a net $(\zeta_{\delta} : \delta \in \Delta)$ converges to $\zeta$ in $V'$ precisely when $\zeta_{\delta} (v) \ra \zeta (v)$ for every $v \in V$. We shall say that the inner product $[ \bullet | \bullet ]$ on $V'$ is {\it compatible} with the original  inner product $\langle \bullet | \bullet \rangle$ on $V$ precisely when: \par
(i) the canonical embedding $ V \ra V'$ is isometric, so that if $x, y \in V$ then 
$$[x' | y' ] = \langle x | y \rangle;$$\par 
(ii) $[ \bullet | \bullet ]$ is weakly continuous in each slot, so that if $\xi_{\delta} \ra \xi$ and $\eta_{\delta} \ra \eta$  then 
$$[\xi_{\delta} | \eta ] \ra [\xi | \eta ] \; \; \; {\rm and} \; \; \; [\xi | \eta_{\delta}  ] \ra [\xi | \eta].$$

\medbreak 

Our approach to this investigation will be by way of finite-dimensional approximation. Write $\mathcal{F} (V)$ for the set comprising all finite-dimensional complex subspaces of $V$; this set is naturally directed by inclusion. Let $\zeta : V \ra \mathbb{C}$ be an antilinear functional on $V$. If $M \in \mathcal{F} (V)$ is any finite-dimensional subspace of $V$ then the restriction $\zeta |_M : M \ra \mathbb{C}$ is given by taking inner product against a vector in $M$: there exists a unique vector $\zeta_M \in M$ such that if $z \in M$ then 
$$\zeta (z)  = \langle z | \zeta_M \rangle.$$

\medbreak 

\begin{theorem} \label{net} 
If $\zeta \in V'$ then the net $(\zeta_M' : M \in \mathcal{F} (V))$ converges weakly to $\zeta$ in $V'$. 
\end{theorem}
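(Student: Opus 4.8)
The plan is to unwind the definitions and observe that, slot by slot, the net is eventually constant, so that weak convergence is immediate.

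First I would fix an arbitrary vector $v \in V$; by the definition of the weak topology on $V'$, it suffices to show that $\zeta_M'(v) \ra \zeta(v)$ along the directed set $\mathcal{F}(V)$. To that end, consider the one-dimensional (hence finite-dimensional) subspace $M_0 = \mathbb{C} v \in \mathcal{F}(V)$. I would then check that for every $M \in \mathcal{F}(V)$ with $M \supseteq M_0$ we have $v \in M$, so that the defining property of $\zeta_M \in M$ — namely $\zeta(z) = \langle z | \zeta_M \rangle$ for all $z \in M$ — may be applied with $z = v$. This yields
$$\zeta_M'(v) = \langle v | \zeta_M \rangle = \zeta(v).$$
Thus the net $(\zeta_M'(v) : M \in \mathcal{F}(V))$ is eventually (for all $M \supseteq M_0$) equal to the constant $\zeta(v)$, and in particular it converges to $\zeta(v)$.

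Since $v \in V$ was arbitrary, this establishes that $\zeta_M'(v) \ra \zeta(v)$ for every $v$, which is precisely the assertion that $(\zeta_M' : M \in \mathcal{F}(V))$ converges weakly to $\zeta$ in $V'$. Along the way I would note that $\mathcal{F}(V)$ is genuinely directed by inclusion (given $M_1, M_2$, the subspace $M_1 + M_2$ is finite-dimensional and dominates both), so that "eventually constant" has its usual meaning for nets indexed by $\mathcal{F}(V)$.

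I do not anticipate a genuine obstacle here: the only point requiring a moment's care is that although the vectors $\zeta_M$ vary with $M$, their pairing against any fixed $v$ stabilises as soon as $M$ is large enough to contain $v$, because $\zeta_M$ is characterised by its action on all of $M$. The result is therefore essentially a tautological consequence of the construction of $\zeta_M$, and it is presumably recorded as a theorem because it is the seed from which the subsequent (more delicate) discontinuity and partial-extension phenomena grow.
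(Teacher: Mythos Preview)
Your proof is correct and follows essentially the same approach as the paper: fix a test vector, observe that once $M$ contains it the value $\zeta_M'(v)=\langle v\mid\zeta_M\rangle$ equals $\zeta(v)$, so the net is eventually constant at each point and hence weakly convergent to $\zeta$. The only difference is cosmetic --- you spell out the tail index $M_0=\mathbb{C}v$ and the directedness of $\mathcal{F}(V)$ explicitly, which the paper leaves implicit.
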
 

\begin{proof} 
Let $z \in V$ be arbitrary: on the one hand, if $M \in \mathcal{F} (M)$ then $\zeta_M' (z) = \langle z | \zeta_M \rangle$; on the other, if also $z \in M$ then $\langle z | \zeta_M \rangle = \zeta (z)$. Thus the given net is eventually constant at each point and so pointwise convergent, with the correct limit. 
\end{proof} 

\medbreak 

Otherwise said, if $\zeta \in V'$ then 
$$\zeta = \lim_{M \uparrow \mathcal{F} (V)} \zeta_M'.$$

\medbreak 

We may use these elementary finite-dimensional approximating nets to analyze a compatible inner product. 

\medbreak 

First we note that a compatible inner product in $V'$ restricts to reproduce the natural `duality' pairing between $V$ and its antidual. 

\medbreak 

\begin{theorem} \label{pairing}
Let $[ \bullet | \bullet ]$ be a compatible inner product on $V'$. If $x, y \in V$ and $\xi, \eta \in V'$ then
$$[x' | \eta] = \eta (x), \; \; \; \; \; [\xi | y' ] = \overline{\xi (y)}.$$
\end{theorem}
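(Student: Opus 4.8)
The plan is to exploit the elementary approximating nets furnished by Theorem~\ref{net} together with the weak continuity hypothesis (ii) and the isometry hypothesis (i): this reduces each identity to an evaluation of $[\bullet|\bullet]$ on the subspace $\widehat V = \{v' : v \in V\}$, where a compatible inner product is completely determined.

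For the first identity, fix $x \in V$ and $\eta \in V'$. By Theorem~\ref{net} we have $\eta = \lim_{M \uparrow \mathcal{F}(V)} \eta_M'$ in the weak topology, so weak continuity of $[\bullet|\bullet]$ in its second slot gives
$$[x' \, | \, \eta] = \lim_{M \uparrow \mathcal{F}(V)} [x' \, | \, \eta_M'].$$
Now each $\eta_M'$ lies in $\widehat V$, so compatibility condition (i) applies and yields $[x' \, | \, \eta_M'] = \langle x \, | \, \eta_M \rangle$; moreover, as soon as $M$ contains $x$ the defining property of $\eta_M$ gives $\langle x \, | \, \eta_M \rangle = \eta(x)$. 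Since $\mathrm{span}\{x\} \in \mathcal{F}(V)$ and $\mathcal{F}(V)$ is directed by inclusion, the condition $M \supseteq \mathrm{span}\{x\}$ holds cofinally, hence eventually, so the net $\big([x' \, | \, \eta_M']\big)$ is eventually constant with value $\eta(x)$, and therefore $[x' \, | \, \eta] = \eta(x)$.

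For the second identity one may argue symmetrically in the first slot — writing $\xi = \lim_M \xi_M'$, invoking weak continuity in the first slot, and noting that for $M \ni y$ one has $[\xi_M' \, | \, y'] = \langle \xi_M \, | \, y \rangle = \overline{\langle y \, | \, \xi_M \rangle} = \overline{\xi(y)}$ — or simply appeal to conjugate symmetry of $[\bullet|\bullet]$ and the first identity: $[\xi \, | \, y'] = \overline{[y' \, | \, \xi]} = \overline{\xi(y)}$. I do not expect a genuine obstacle here; the only points deserving attention are the remark that a net over $\mathcal{F}(V)$ which is constant on a tail $\{M : M \supseteq \mathrm{span}\{x\}\}$ converges to that constant value, and the observation that weak continuity in a single slot is precisely what permits interchanging the limit with the inner product — both immediate from the definitions already in place.
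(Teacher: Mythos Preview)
Your proof is correct and follows essentially the same route as the paper: approximate $\eta$ by the net $(\eta_M')$ from Theorem~\ref{net}, use compatibility clause~(i) to rewrite $[x'\,|\,\eta_M'] = \langle x\,|\,\eta_M\rangle$, observe this equals $\eta(x)$ once $M \ni x$, and pass to the limit via clause~(ii). The paper compresses all of this into a single displayed chain and likewise dispatches the second identity by symmetry.
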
 

\begin{proof} 
We need only establish the first identity. For this, let $M \in \mathcal{F} (V)$: once $ M$ contains $x$ it follows that  
$$\eta (x) = \langle x | \eta_M \rangle = [x' | \eta_M' ] \ra [x' | \eta]$$
on account of compatibility and Theorem \ref{net}. 
\end{proof} 

\medbreak

Next we note that finite-dimensional approximants permit the reconstruction of a compatible inner product in its entirety. 

\medbreak 

\begin{theorem} \label{un} 
Let $[ \bullet | \bullet ]$ be a compatible inner product on $V'$. If $\xi, \eta \in V'$ then 
$$[\xi | \eta ] = \lim_{M \uparrow \mathcal{F} (V)} \langle \xi_M | \eta_M \rangle.$$
\end{theorem}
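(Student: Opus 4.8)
The plan is to combine the two preceding theorems with the weak continuity built into compatibility, thereby reducing the asserted (apparently two-variable) limit to a limit in a single slot. Fix $\xi, \eta \in V'$. The pivotal observation is that for each $M \in \mathcal{F}(V)$ the approximant $\xi_M$ lies in $M \subseteq V$, so $\xi_M'$ is a genuine element of $\widehat{V} \subseteq V'$, and hence Theorem \ref{pairing} is directly applicable to it.

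First I would record, for an arbitrary $M \in \mathcal{F}(V)$, the identity
$$[\xi_M' | \eta] = \eta(\xi_M) = \langle \xi_M | \eta_M \rangle.$$
The first equality is the first assertion of Theorem \ref{pairing} applied with $x = \xi_M$; the second holds because $\xi_M \in M$ and $\eta_M$ is, by definition, the vector in $M$ representing $\eta|_M$. One must merely check that the conventions line up: $\eta$ is antilinear and $\langle \bullet | \eta_M \rangle$ is antilinear in its first slot, so $\eta(\xi_M) = \langle \xi_M | \eta_M \rangle$ with no conjugation discrepancy, and—unlike in the proof of Theorem \ref{pairing}—no ``eventually'' clause is needed here, since $\xi_M$ belongs to $M$ by construction.

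Next I would pass to the limit over $M$. By Theorem \ref{net} the net $(\xi_M' : M \in \mathcal{F}(V))$ converges weakly to $\xi$, and by compatibility (ii) the functional $[\bullet | \eta]$ is weakly continuous; therefore $[\xi_M' | \eta] \ra [\xi | \eta]$. Combining this with the displayed identity gives $\langle \xi_M | \eta_M \rangle \ra [\xi | \eta]$, which is exactly the claim. A symmetric argument, using the second assertion of Theorem \ref{pairing} to write $[\xi | \eta_M'] = \overline{\xi(\eta_M)} = \langle \xi_M | \eta_M \rangle$ and then invoking weak continuity in the second slot together with $\eta_M' \ra \eta$, yields the same conclusion and can serve as a cross-check.

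I do not anticipate a serious obstacle: once Theorems \ref{net} and \ref{pairing} are in hand, the whole content is that the diagonal-looking expression $\langle \xi_M | \eta_M \rangle$ is secretly the one-variable quantity $[\xi_M' | \eta]$, to which weak continuity in a single slot applies at once; no genuine double limit has to be controlled. The only points demanding care are bookkeeping ones—the antilinear/linear slot convention in the identity above, and the fact that $\xi_M \in M$ so that $\xi_M'$ really does lie in the image of $V$ where Theorem \ref{pairing} can be used.
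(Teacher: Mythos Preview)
Your proof is correct and follows essentially the same route as the paper: the paper likewise writes $\langle \xi_M | \eta_M \rangle = \eta(\xi_M) = [\xi_M' \mid \eta]$ via Theorem~\ref{pairing} and then invokes Theorem~\ref{net} together with compatibility to pass to the limit. Your write-up is simply more expansive, with the symmetric cross-check as a pleasant addition.
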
 

\begin{proof} 
If $M \in \mathcal{F} (V)$ then Theorem \ref{pairing} ensures that 
$$\langle \xi_M | \eta_M \rangle = \eta (\xi_M) = [ \xi_M' | \eta ]$$
whereupon compatibility and Theorem \ref{net} complete the argument. 
\end{proof} 

\medbreak

Accordingly, $V'$ carries {\it at most one} compatible inner product; we now address the question of existence. 

\medbreak

It will help to have available the relationship between the approximants relative to a finite-dimensional subspace and one of its hyperplanes. 

\medbreak 

\begin{theorem} \label{hyper}
Let $N = M \oplus \mathbb{C} u$ where $u \in V$ is a unit vector orthogonal to $M \in \mathcal{F} (V)$. If $\zeta \in V'$ then 
$$\zeta_N = \zeta_M + \zeta(u) u.$$
\end{theorem}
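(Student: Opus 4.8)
The plan is to exploit the uniqueness built into the definition of the approximant: $\zeta_N$ is the one and only vector of $N$ with the property that $\langle z \mid \zeta_N \rangle = \zeta(z)$ for every $z \in N$. It therefore suffices to exhibit \emph{some} vector $w \in N$ enjoying this property, for then $\zeta_N = w$ automatically. The natural candidate is $w := \zeta_M + \zeta(u)\, u$, which plainly lies in $N = M \oplus \mathbb{C} u$ (its components in the given direct sum being $\zeta_M$ and $\zeta(u)$).

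To verify that $w$ represents $\zeta$ on $N$, I would observe that both $\zeta|_N$ and the map $z \mapsto \langle z \mid w \rangle$ are antilinear functionals on $N$, so it is enough to check that they agree on the spanning set $M \cup \{u\}$. First take $z \in M$: since $u$ is orthogonal to $M$ we have $\langle z \mid u \rangle = 0$, whence $\langle z \mid w \rangle = \langle z \mid \zeta_M \rangle + \zeta(u)\langle z \mid u \rangle = \langle z \mid \zeta_M \rangle = \zeta(z)$, the last equality being the defining property of $\zeta_M$. Next take $z = u$: now $\zeta_M \in M$ is orthogonal to $u$, so $\langle u \mid \zeta_M \rangle = 0$, and since $u$ is a unit vector $\langle u \mid w \rangle = \langle u \mid \zeta_M \rangle + \zeta(u)\langle u \mid u \rangle = \zeta(u)$, as required.

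Two antilinear functionals that agree on a spanning set of $N$ agree on all of $N$, so $w$ has the property uniquely characterizing $\zeta_N$, and hence $\zeta_N = \zeta_M + \zeta(u)\, u$. I do not expect a real obstacle here; the only point demanding care is the sesquilinear bookkeeping — keeping straight that the two vanishing inner products being used are $\langle z \mid u \rangle = 0$ for $z \in M$ (for the first slot) and $\langle u \mid \zeta_M \rangle = 0$ (for the second slot), and that the scalar $\zeta(u)$ occupies the linear second slot of $\langle z \mid w \rangle$, which is what makes the identity consistent with the antilinearity of $\zeta$ when one extends from the spanning set to a general element $z = m + cu$ of $N$.
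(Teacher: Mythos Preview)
Your proof is correct and follows essentially the same approach as the paper's own proof: both verify that the candidate vector $\zeta_M + \zeta(u)\,u$ lies in $N$ and represents $\zeta$ on each summand of the orthogonal decomposition $N = M \oplus \mathbb{C}u$, then invoke the uniqueness of the representing vector. The paper is merely terser.
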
 

\begin{proof} 
Immediate: if $z \in M$ then $\zeta (z) = \langle z | \zeta_M \rangle$ and if $\lambda \in \mathbb{C}$ then $\zeta (\lambda u) = \bar{\lambda} \zeta (u) = \langle \lambda u | \zeta (u) u \rangle$. 
\end{proof} 

\medbreak 

It follows at once that if $\xi, \eta \in V'$ then 
$$\langle \xi_N | \eta_N \rangle = \langle \xi_M | \eta_M \rangle + \overline{\xi (u)} \eta (u).$$
Inductively, if $N = M \oplus L$ is an orthogonal decomposition then 
$$||\zeta_N ||^2 = || \zeta_M ||^2 + || \zeta_L ||^2;$$
indeed, $\zeta_M$ and $\zeta_L$ are the respective orthogonal projections of $\zeta_N$ on $M$ and $L$.

\medbreak 

The following result will also be useful. 

\medbreak 

\begin{theorem} \label{rad}
If $x, y \in V$ are unit vectors then the supremum of $|\langle x | u \rangle \langle u | y \rangle |$ as $u \in V$ runs over all unit vectors is at least $1/2$. 
\end{theorem}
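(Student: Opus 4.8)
The plan is to exhibit a single unit vector $u$ achieving $|\langle x | u \rangle \langle u | y \rangle| \geq 1/2$; since $V$ is complex, I first normalize the relative phase of $x$ and $y$. Note that replacing $y$ by $e^{i\alpha} y$ multiplies $\langle x | u \rangle \langle u | y \rangle$ by $e^{i\alpha}$, hence leaves $|\langle x | u \rangle \langle u | y \rangle|$ unchanged, while it clearly does not affect the set of unit vectors over which the supremum is taken; so, writing $\langle x | y \rangle = c\, e^{i\alpha}$ with $c = |\langle x | y \rangle| \in [0,1]$ and replacing $y$ by $e^{-i\alpha} y$, I may assume from the start that $\langle x | y \rangle = c \geq 0$.

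The naive choices $u = x$ and $u = y$ each yield the value $|\langle x | y \rangle|$, which already exceeds $1/2$ when $x$ and $y$ are nearly parallel but is useless when they are nearly orthogonal; the remedy is to bisect. With $\langle x | y \rangle = c \geq 0$, the vector $x + y$ is nonzero (it could vanish only if $y = -x$, forcing $c = -1$), so set
$$u = \frac{x+y}{\|x+y\|}, \qquad \|x+y\|^2 = \langle x + y | x + y \rangle = 2 + 2c.$$
A direct computation gives $\langle x | x + y \rangle = 1 + c$ and $\langle x + y | y \rangle = 1 + c$, hence
$$\langle x | u \rangle = \langle u | y \rangle = \frac{1+c}{\sqrt{2+2c}} = \sqrt{\frac{1+c}{2}},$$
so that $|\langle x | u \rangle \langle u | y \rangle| = \dfrac{1+c}{2} \geq \dfrac12$, as required. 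In fact this shows the supremum in question is at least $(1 + |\langle x | y \rangle|)/2$.

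There is no real obstacle here: the only idea is to test against the normalized bisector $x + y$ rather than against $x$ or $y$ separately, after which the estimate reduces to evaluating one inner product. The two points to keep an eye on are that the preliminary phase adjustment genuinely leaves $|\langle x | u \rangle \langle u | y \rangle|$ invariant, and that $x + y \neq 0$ once $\langle x | y \rangle$ has been made nonnegative, so that the candidate $u$ is legitimately a unit vector; both are immediate.
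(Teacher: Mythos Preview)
Your proof is correct and takes a genuinely different route from the paper's. The paper recasts the supremum as the numerical radius $w(T)$ of the rank-one operator $T(z) = \langle x | z \rangle y$, and then proves the general inequality $2\,w(T) \geq \|T\|$ (valid for any bounded operator) via polarization and the parallelogram law; specializing to this $T$ with $\|T\| = 1$ gives the result. Your argument instead stays entirely elementary: after normalizing the phase so that $\langle x | y \rangle = c \geq 0$, you simply test the normalized bisector $u = (x+y)/\|x+y\|$ and compute directly. What the paper's approach buys is a connection to the general numerical-radius inequality, which explains conceptually why $1/2$ appears and would apply to any operator, not just rank-one. What your approach buys is that it is shorter, exhibits an explicit maximizer (so the supremum is attained), and in fact yields the sharper bound $(1 + |\langle x | y \rangle|)/2$, matching the exact endpoint the paper mentions for the real case.
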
 

\begin{proof} 
Define $T : V \ra V$ by $T(z) = \langle x | z \rangle y$; note that $T$ has unit operator norm. Quite generally, the numerical radius $w(T)$ of the operator $T$ is defined by 
$$w(T) = \sup \{ |\langle u | Tu \rangle| : ||u|| = 1 \};$$
for example, see [1]. Here, 
$$\langle u | T u \rangle = \langle x | u \rangle \langle u | y \rangle$$
so that $w(T)$ is precisely the supremum described in the statement of the theorem. 
If $u, v \in V$ are unit vectors then by polarization 
$$2\langle u | T v \rangle + 2 \langle v | T u \rangle = \langle u + v | T (u + v) \rangle - \langle u - v | T (u - v) \rangle$$
whence the parallelogram law yields
$$2 | \langle u | T v \rangle + 2 \langle v | T u \rangle | \leqslant w(T) \{ || u + v ||^2 + || u - v ||^2\} = 4 w(T);$$
furthermore, if $v = T(u)/||T(u)||$ then 
$$\langle u | T v \rangle +  \langle v | T u \rangle = \frac{\langle u | T^2 u \rangle}{||T(u)||} + ||T(u)||.$$
Choose the unimodular scalar $\lambda$ so that $\lambda^2 \langle u | T^2 u \rangle \geqslant 0$ and apply the foregoing analysis to $\lambda T$ in place of $T$ itself, to deduce that $2 w(T) \geqslant || T u ||$. Finally, take the supremum as $u$ runs over all unit vectors, to conclude that $2 w(T) \geqslant || T || = 1$. 
\end{proof} 

\medbreak

Our present purposes are adequately served by this estimate, but the identification of this supremum as a numerical radius leads to an exact formula. The cleanest formula obtains when $( \bullet | \bullet )$ is a {\it real} inner product, in which case the set of all reals $(x | u) (u | y)$ as $u$ runs over all unit vectors is a closed interval of unit length, namely 
$$[((x | y) - 1)/2 , ((x | y) + 1)/2 ].$$

\medbreak 

Notice that the operator norm of any (bounded) antifunctional on $V$ can be identified in terms of its finite-dimensional approximants. 

\medbreak 

\begin{theorem} \label{norm}
If $\zeta \in V'$ then 
$$|| \zeta || = \sup \{ || \zeta_M || : M \in \mathcal{F} (V) \} \in [0, \infty].$$
\end{theorem}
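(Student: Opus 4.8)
The plan is to play each side off against the elementary description of the operator norm $\|\zeta\| = \sup\{\,|\zeta(z)| : z \in V, \ \|z\| \leqslant 1\,\} \in [0,\infty]$, using throughout the defining relation $\zeta(z) = \langle z | \zeta_M \rangle$ valid for $z \in M$. I would establish the two inequalities $\sup_M \|\zeta_M\| \leqslant \|\zeta\|$ and $\|\zeta\| \leqslant \sup_M \|\zeta_M\|$ separately, each by a one-line Cauchy--Schwarz estimate, and note that both arguments are insensitive to whether $\|\zeta\|$ is finite or infinite.

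For $\sup_M \|\zeta_M\| \leqslant \|\zeta\|$: fix $M \in \mathcal{F}(V)$; if $\zeta_M = 0$ there is nothing to do, and otherwise the unit vector $z = \zeta_M/\|\zeta_M\|$ satisfies $|\zeta(z)| = |\langle z | \zeta_M \rangle| = \|\zeta_M\|$, whence $\|\zeta_M\| \leqslant \|\zeta\|$; now take the supremum over $M \in \mathcal{F}(V)$. For the reverse inequality $\|\zeta\| \leqslant \sup_M \|\zeta_M\|$: given a nonzero $z \in V$, apply the situation to the one-dimensional subspace $M = \mathbb{C} z \in \mathcal{F}(V)$; writing $\zeta_M = \alpha z$, the relation $\zeta(z) = \langle z | \alpha z \rangle = \alpha \|z\|^2$ gives $\|\zeta_M\| = |\zeta(z)|/\|z\|$, so that $|\zeta(z)|/\|z\| \leqslant \sup_M \|\zeta_M\|$; taking the supremum over all nonzero $z \in V$ finishes it. Combining the two bounds yields the stated identity.

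I would close with the remark — consistent with the paragraph following Theorem~\ref{hyper}, where $\zeta_M$ is exhibited as the orthogonal projection of $\zeta_N$ onto $M$ whenever $M \subseteq N$ — that the net $(\|\zeta_M\| : M \in \mathcal{F}(V))$ is monotonically increasing along $\mathcal{F}(V)$, so the supremum is in fact attained in the limit: $\|\zeta\| = \lim_{M \uparrow \mathcal{F}(V)} \|\zeta_M\|$. Since every step is a routine estimate, I do not expect a genuine obstacle here; the only points that warrant a moment's care are the bookkeeping with an extended-real supremum and the degenerate case $\zeta_M = 0$.
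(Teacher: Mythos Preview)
Your argument is correct and matches the paper's proof essentially line for line: both directions proceed by the same choices ($M=\mathbb{C}z$ for $\|\zeta\|\leqslant\sup_M\|\zeta_M\|$, and evaluation at $\zeta_M$ itself for the reverse), and your closing remark on monotonicity is exactly the observation the paper makes immediately after the theorem. The only cosmetic difference is that you normalize $\zeta_M$ before evaluating whereas the paper computes $\|\zeta_M\|^2=\zeta(\zeta_M)$ and cancels.
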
 

\begin{proof} 
In one direction, let $K$ be the indicated supremum: if $z \in V$ then let $M = \mathbb{C} z$ and calculate $| \zeta (z)| = | \langle z | \zeta_M \rangle | \leqslant || z || \; || \zeta_M || \leqslant K || z ||$. In the opposite direction, if $M \in \mathcal{F} (V)$ then $\zeta_M \in M$ so that $|| \zeta_M ||^2 = \langle \zeta_M | \zeta_M \rangle = \zeta (\zeta_M) \leqslant || \zeta || \; || \zeta_M ||$ and cancellation ends the argument. 
\end{proof} 

\medbreak 

In fact, the net $(|| \zeta_M || : M \in \mathcal{F} (V))$ is increasing, as the remark after Theorem \ref{hyper} makes clear; consequently, 
$$|| \zeta || = \lim_{M \uparrow \mathcal{F} (V)} || \zeta_M ||.$$

\medbreak 

We coordinate these theorems to effect a proof of the next. 

\medbreak 

\begin{theorem} \label{non}
If the antifunctionals $\xi, \eta \in V'$ are unbounded then the net 
$( \langle \xi_M | \eta_M \rangle : M \in \mathcal{F} (V))$
does not converge. 
\end{theorem}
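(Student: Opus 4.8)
The plan is to argue by contradiction. Suppose the net $\bigl(\langle \xi_M | \eta_M \rangle : M \in \mathcal{F}(V)\bigr)$ converges, say to $c \in \mathbb{C}$. Then it is eventually within distance $1$ of $c$, so there is $M_0 \in \mathcal{F}(V)$ with $|\langle \xi_N | \eta_N \rangle| \leqslant |c| + 1$ whenever $N \supseteq M_0$. For \emph{any} finite-dimensional subspace $L$ of $W := M_0^{\perp}$ put $N := M_0 \oplus L$; then $N \supseteq M_0$, and since (by the remark following Theorem \ref{hyper}) the approximants $\xi_{M_0}, \xi_L$ and $\eta_{M_0}, \eta_L$ are the orthogonal projections of $\xi_N, \eta_N$ across the orthogonal decomposition $N = M_0 \oplus L$, we get $\langle \xi_N | \eta_N \rangle = \langle \xi_{M_0} | \eta_{M_0} \rangle + \langle \xi_L | \eta_L \rangle$; hence
$$|\langle \xi_L | \eta_L \rangle| \leqslant |c| + 1 + |\langle \xi_{M_0} | \eta_{M_0} \rangle| =: B \quad \text{for every finite-dimensional } L \subseteq W.$$
I will contradict this by producing, for arbitrarily large $R$, a finite-dimensional $L' \subseteq W$ with $|\langle \xi_{L'} | \eta_{L'} \rangle| > R^2/2$. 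Note that $W$ is infinite-dimensional ($V$ is and $M_0$ is not), and that $\xi|_W$ and $\eta|_W$ are both unbounded: if, say, $\xi|_W$ were bounded then $\xi$ would be bounded on $V = M_0 \oplus W$, its restriction to $M_0$ being automatically bounded.

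So fix $R > 0$, choose unit vectors $a, b \in W$ with $|\xi(a)| > R$ and $|\eta(b)| > R$, and set $L := \mathbb{C} a + \mathbb{C} b \subseteq W$. With $s := \|\xi_L\|$ and $t := \|\eta_L\|$ we have $s \geqslant |\langle a | \xi_L \rangle| = |\xi(a)| > R$, and likewise $t > R$. Now argue inside the finite-dimensional inner product space $L$: the vectors $x := \xi_L / s$ and $y := \eta_L / t$ are unit vectors of $L$, so Theorem \ref{rad} (whose proof is valid in any inner product space), applied within $L$, yields a unit vector $u \in L$ with $|\langle x | u \rangle \langle u | y \rangle| \geqslant 1/2$. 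Since $u \in L$, $\langle x | u \rangle = \overline{\langle u | \xi_L \rangle}/s = \overline{\xi(u)}/s$ and $\langle u | y \rangle = \langle u | \eta_L \rangle / t = \eta(u)/t$, so $|\xi(u)\,\eta(u)| \geqslant st/2 > R^2/2$. Finally put $L' := \mathbb{C} u \subseteq W$: by the computation in the proof of Theorem \ref{hyper} we have $\xi_{L'} = \xi(u)\,u$ and $\eta_{L'} = \eta(u)\,u$, so $|\langle \xi_{L'} | \eta_{L'} \rangle| = |\xi(u)\,\eta(u)| > R^2/2$. Taking $R > \sqrt{2B}$ contradicts the displayed bound, completing the proof.

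The one genuinely substantial step is the passage, inside $L$, from the separate largeness of $\|\xi_L\|$ and $\|\eta_L\|$ to the largeness of the \emph{product} $|\xi(u)\,\eta(u)|$ at a single unit vector $u$: Cauchy--Schwarz only bounds $|\langle \xi_L | \eta_L \rangle|$ above by $st$, which is the wrong direction, and Theorem \ref{rad} is exactly the tool that converts the two one-sided bounds into a two-sided one. Everything else --- localizing to $M_0^{\perp}$, the additivity of $\langle \xi_N | \eta_N \rangle$ across orthogonal decompositions, and the one-dimensional formula $\zeta_{\mathbb{C} u} = \zeta(u)\,u$ --- is routine bookkeeping with Theorem \ref{hyper} and its corollaries.
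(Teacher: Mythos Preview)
Your proof is correct and follows essentially the same route as the paper's: localize to the orthocomplement of a fixed finite-dimensional subspace, use unboundedness there to make $\|\xi_L\|$ and $\|\eta_L\|$ large, invoke the numerical-radius estimate of Theorem~\ref{rad} inside $L$ to find a single unit vector $u$ with $|\overline{\xi(u)}\eta(u)|$ large, and finish via the additivity supplied by Theorem~\ref{hyper}. The only cosmetic difference is that the paper shows directly that the net is not Cauchy (producing for each $M$ an $N = M \oplus \mathbb{C}u$ with $|\langle \xi_N|\eta_N\rangle - \langle \xi_M|\eta_M\rangle| > 1/2$), whereas you phrase it as a contradiction to convergence; your appeal to attainment of the supremum in Theorem~\ref{rad} is justified since $L$ is finite-dimensional.
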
 

\begin{proof} 
The net $( \langle \xi_M | \eta_M \rangle : M \in \mathcal{F} (V))$ is not Cauchy. In fact, let the finite-dimensional subspace $M \in \mathcal{F} (V)$ be arbitrary. The restrictions of $\xi$ and $\eta$ to the orthocomplement $M^{\perp}$ being unbounded, Theorem \ref{norm} provides $L \in \mathcal{F} (M^{\perp})$  such that $|| \xi_L ||$ and $|| \eta_L ||$ are as large as we please; say greater than unity. Theorem \ref{rad} provides a unit vector $u \in L$ such that  $ \overline{\xi (u)} \eta (u) =  \langle \xi_L | u \rangle \langle u | \eta_L \rangle$ has modulus greater than $1/2$. Finally, Theorem \ref{hyper} shows that $N = M \oplus \mathbb{C} u \in \mathcal{F}(V)$ satisfies 
$| \langle \xi_N | \eta_N \rangle - \langle \xi_M | \eta_M \rangle | > 1/2.$
\end{proof} 

\medbreak

Theorem \ref{un} and Theorem \ref{non} together imply that compatible inner products are {\it nonexistent}. We are led to ask what can be salvaged from this negative result. 

\medbreak 

Taking a cue from Theorem \ref{un} we define the {\it partial} inner product $[ \bullet | \bullet ] = [ \bullet | \bullet ]_V$ in $V'$ by the rule that if $\xi, \eta \in V'$ then 
$$[\xi | \eta] : = \lim_{M \uparrow \mathcal{F} (V)} \langle \xi_M | \eta_M \rangle$$
{\it whenever this limit exists}. 

\medbreak 

This rule does define a partial inner product extending $\langle \bullet | \bullet \rangle$. It is plainly Hermitian, in the sense that if $[\xi | \eta ]$ is defined then so is $[ \eta | \xi ]$ and 
$$[ \eta | \xi ] = \overline{ [ \xi | \eta ] }.$$
It is plainly also linear in the second slot (and therefore antilinear in the first) in the sense that if $[ \xi | \eta_1 ]$ and $[ \xi | \eta_2 ]$ are defined then so is $[ \xi | \lambda_1 \eta_1 + \lambda_2 \eta_2 ]$ and 
$$[ \xi | \lambda_1 \eta_1 + \lambda_2 \eta_2 ] = [ \xi | \eta_1 ] + [ \xi | \eta_2 ]$$
whenever $\lambda_1 , \lambda_2 \in \mathbb{C}$. Finally, Theorem \ref{norm} makes it clear that $[ \zeta | \zeta ]$ is defined precisely when the antifunctional $\zeta$ is bounded, in which case $[ \zeta | \zeta ] \geqslant 0$ with equality if and only if $\zeta = 0$. 

\medbreak 

This last point can be amplified a little. Let us denote by $V^* \subset V'$ the subspace comprising all {\it bounded} antifunctionals. Let $\xi, \eta \in V^*$: polarization in $V$ shows that if $M \in \mathcal{F} (V)$ then 
$$4 \langle \xi_M | \eta_M \rangle = \sum_{n = 0}^3 i^{-n} || \xi_M + i^n \eta_M ||^2 = \sum_{n = 0}^3 i^{-n} || (\xi + i^n \eta)_M ||^2$$
and the remark after Theorem \ref{norm} justifies passage to the limit as $M \uparrow \mathcal{F} (V)$ producing 
$$4 [ \xi | \eta ] =  \sum_{n = 0}^3 i^{-n} || \xi + i^n \eta ||^2$$
with operator norm on the right. Thus the partial inner product $[ \bullet | \bullet ]$ is defined on $V^*$ where it becomes a true inner product underlying the operator norm. More is true: it may be checked (as an instructive exercise) that if $\zeta \in V^*$ then the net $(\zeta_M' : M \in \mathcal{F} (V))$ converges to $\zeta$ in operator norm, improving Theorem \ref{net} in this circumstance; so $V^*$ furnishes a canonical model for the Hilbert space completion of $V$. 

\medbreak 

This partial inner product $[ \bullet | \bullet ]$ is also defined on $V \times V'$ and $V' \times V$ upon which it induces the natural pairing between $V$ and $V'$: an argument akin to the one for Theorem \ref{pairing} shows that if $x, y \in V$ and $\xi, \eta \in V'$ then
$$[x' | \eta] = \eta (x), \; \; \; \; \; [\xi | y' ] = \overline{\xi (y)}.$$

\medbreak

We may extend this partial inner product by replacing the directed set $\mathcal{F} (V)$ with one of its cofinal subsets $\mathcal{S} \subseteq \mathcal{F} (V)$: if the net $( \langle \xi_M | \eta_M \rangle : M \in \mathcal{F} (V) )$ converges then so does its subnet $( \langle \xi_M | \eta_M \rangle : M \in \mathcal{S} )$ and the limits coincide; however, the latter net may converge even though the former does not. Of course, each such extension will continue to reproduce both the Hilbert space completion of $V$ and the canonical pairing with its antidual; but such an extension may have further properties. 

\medbreak 

One example will suffice as an illustration. Let $$V = X \oplus Y$$ be an orthogonal decomposition, with $P_X : V \ra X$ and $P_Y : V \ra Y$ as corresponding orthogonal projectors. Note that antilinear extension by zero on orthocomplements yields the canonical embeddings $X' \ra V' : \xi \mapsto \xi \circ P_X$ and $Y' \ra V' : \eta \mapsto \eta \circ P_Y$. Write $\mathcal{F} (X, Y)$ for the set comprising all finite-dimensional subspaces $M$ of $V$ that split under this decomposition as $M = M_X \oplus M_Y$ where $M_X = P_X (M)$ and $M_Y = P_Y (M)$. The subset $\mathcal{F} (X, Y) \subseteq \mathcal{F} (V)$ is certainly cofinal: indeed, each $M \in \mathcal{F} (V)$ is contained in $M_X \oplus M_Y \in \mathcal{F} (X, Y)$. Now, when $\xi, \eta \in V'$ let us agree to write 
$$[ \xi | \eta ]_{X, Y} = \lim_{M \uparrow \mathcal{F} (X, Y)} \langle \xi_M | \eta_M \rangle.$$ 

\medbreak 

This new partial inner product circumvents Theorem \ref{non} in being defined on certain pairs of unbounded antifunctionals; for example, as in the following result.

\medbreak 

\begin{theorem} \label{XY}
If $\xi \in X'$ and $\eta \in Y'$ then $[\xi \circ  P_X | \eta \circ P_Y ]_{X, Y} = 0$. 
\end{theorem}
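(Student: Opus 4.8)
The plan is to show that along the cofinal directed set $\mathcal{F}(X,Y)$ the approximating net is \emph{identically} zero, so that its limit exists trivially and equals $0$. Fix $M \in \mathcal{F}(X,Y)$, so that by definition $M = M_X \oplus M_Y$ is an orthogonal decomposition with $M_X = P_X(M) \subseteq X$ and $M_Y = P_Y(M) \subseteq Y$.

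First I would locate the approximant $(\xi \circ P_X)_M$. The antifunctional $\xi \circ P_X$ annihilates $Y$, hence annihilates $M_Y$; so its approximant relative to $M_Y$ is the zero vector of $M_Y$. Invoking the remark following Theorem \ref{hyper} — that relative to an orthogonal decomposition the approximant of the whole is the sum of the approximants of the summands, these being the orthogonal projections — I would conclude that $(\xi \circ P_X)_M = (\xi \circ P_X)_{M_X} + (\xi \circ P_X)_{M_Y} = (\xi \circ P_X)_{M_X} \in M_X \subseteq X$. The symmetric argument, with the roles of $X$ and $Y$ interchanged, gives $(\eta \circ P_Y)_M \in M_Y \subseteq Y$.

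Since $X$ and $Y$ are orthogonal it follows that $\langle (\xi \circ P_X)_M \, | \, (\eta \circ P_Y)_M \rangle = 0$. As $M \in \mathcal{F}(X,Y)$ was arbitrary, the net $\big( \langle (\xi \circ P_X)_M \, | \, (\eta \circ P_Y)_M \rangle : M \in \mathcal{F}(X,Y) \big)$ is constantly $0$; hence it converges and $[\xi \circ P_X \, | \, \eta \circ P_Y]_{X,Y} = 0$.

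I expect no real obstacle here: the single point needing care is the factorization of the approximant through the orthogonal decomposition $M = M_X \oplus M_Y$, and that is exactly what the inductive remark after Theorem \ref{hyper} supplies (applied one orthogonal hyperplane at a time). It is precisely the passage from $\mathcal{F}(V)$ to the subset $\mathcal{F}(X,Y)$ that makes this factorization available for \emph{every} member of the directed set, which is why the conclusion can fail for the unrestricted partial inner product $[\bullet\,|\,\bullet]_V$ in view of Theorem \ref{non}.
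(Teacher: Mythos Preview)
Your argument is correct and follows essentially the same route as the paper: you show that for each $M \in \mathcal{F}(X,Y)$ one has $(\xi \circ P_X)_M \in M_X \subseteq X$ and $(\eta \circ P_Y)_M \in M_Y \subseteq Y$, whence orthogonality of $X$ and $Y$ forces every term of the net to vanish. The paper states this fact and leaves it as an exercise, whereas you supply the justification via the remark after Theorem~\ref{hyper}; this is exactly what is intended.
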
 

\begin{proof} 
As $X$ and $Y$ are orthogonal, this follows at once from the fact (left as another exercise) that if $M \in \mathcal{F} (X, Y)$ then $(\xi \circ P_X)_M = \xi_{M_X} \in M_X \subseteq X$ and $(\eta \circ P_Y)_M = \eta_{M_Y} \in M_Y \subseteq Y.$
\end{proof} 

\medbreak

It is appropriate here to issue the reminder that if $\xi \in X'$ and $\eta \in Y'$ are unbounded then $[ \xi \circ P_X | \eta \circ P_Y ]_V$ is {\it undefined}: the net $( \langle (\xi \circ P_X)_M | (\eta \circ P_Y)_M \rangle : M \in \mathcal{F} (V) )$ does not converge, even though $\xi \circ P_X$ vanishes on $Y = X^{\perp}$ and $\eta \circ P_Y$ vanishes on $X = Y^{\perp}$.

\medbreak

\bigbreak

\begin{center} 
{\small R}{\footnotesize EFERENCES}
\end{center} 
\medbreak

[1] G.K. Pedersen, {\it Analysis Now}, Springer Graduate Texts in Mathematics 118 (1989). 

\medbreak

\end{document}